\documentclass[11pt,a4paper]{article}
\usepackage[utf8]{inputenc}
\usepackage[T1]{fontenc}
\usepackage{amsmath,amssymb,amsthm,mathtools}
\usepackage{microtype}
\usepackage{geometry}
\usepackage[hidelinks]{hyperref}

\newtheorem{theorem}{Theorem}[section]
\newtheorem{lemma}[theorem]{Lemma}
\newtheorem{proposition}[theorem]{Proposition}
\newtheorem{corollary}[theorem]{Corollary}
\newtheorem{remark}[theorem]{Remark}

\numberwithin{equation}{section}

\newcommand{\R}{\mathbb{R}}
\newcommand{\Z}{\mathbb{Z}}

\newcommand{\Cc}{C_c}
\newcommand{\disc}{\operatorname{disc}}

\newcommand{\Kbox}{\mathcal{K}}
\newcommand{\Om}{\Omega}
\newcommand{\Vdisc}{V_{\disc,+1}(\R)}

\newcommand{\PGL}{\mathrm{PGL}}

\DeclareMathOperator{\vol}{vol}
\DeclareMathOperator{\supp}{supp}

\title{Bounded Representations by $x^2+y^2-z^2$}
\author{Przemyslaw Chojecki}
\date{March 16, 2026}

\begin{document}

\maketitle

\begin{abstract}
We prove that every sufficiently large integer $n$ can be written in the form
\[
  n=x^2+y^2-z^2,
  \qquad
  \max(x^2,y^2,z^2)\le n.
\]
The proof converts the problem into finding a primitive binary quadratic form of
positive discriminant $4n$ inside a fixed relatively compact open patch of the real
hyperboloid $b^2-4ac=4n$. This is then supplied by Duke's theorem in the precise
point-counting form deduced from the measure-theoretic duality of
Einsiedler--Lindenstrauss--Michel--Venkatesh. A finite parity correction returns to the
original ternary variables. This settles Erd\H{o}s Problem 1148.
\end{abstract}

\section{Introduction}

Let
\[
  Q(x,y,z)=x^2+y^2-z^2.
\]
We shall prove the following which is Erd\H{o}s Problem 1148 (\cite{Erdos}):

\begin{theorem}\label{thm:main}
There exists $N$ such that every integer $n\ge N$ admits integers $x,y,z$ with
\[
  n=x^2+y^2-z^2,
  \qquad
  \max(x^2,y^2,z^2)\le n.
\]
\end{theorem}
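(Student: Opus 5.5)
The plan is to turn the ternary problem into a question about binary quadratic forms. Given integers $(a,b,c)$ with $b$ even and $a\equiv c \pmod 2$, I would set
\[
  x=\tfrac b2,\qquad z=\tfrac{a+c}{2},\qquad y=\tfrac{c-a}{2}.
\]
Then $b^2-4ac=4x^2-4(z-y)(z+y)=4(x^2+y^2-z^2)$. So a form $ax^2+bxy+cy^2$ of discriminant $4n$ with $a\equiv c\pmod 2$ yields a representation $n=x^2+y^2-z^2$. The condition $b^2\equiv 4n\equiv 0 \pmod 4$ makes $b$ even automatically. The size constraint $\max(x^2,y^2,z^2)\le n$ becomes
\[
  |b|\le 2\sqrt n,\qquad |a+c|\le 2\sqrt n,\qquad |c-a|\le 2\sqrt n .
\]
After rescaling by $\sqrt n$, the form $(a,b,c)/\sqrt n$ lies on the fixed hyperboloid $\Vdisc$: $b^2-4ac=4$. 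The constraint then says this point lies in the region $|b|,|a+c|,|c-a|\le 2$.

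First I would exhibit a nonempty relatively compact open patch $\Om$ of that hyperboloid on which all three inequalities hold strictly. The point $(a,b,c)=(-\tfrac{\sqrt3}{2},1,\tfrac{\sqrt3}{2})$ works: it satisfies $1+3=4$, and has $|b|=1$, $a+c=0$, $c-a=\sqrt3<2$. A small neighbourhood of it in $\Vdisc$ is such a patch, with positive invariant volume. The case where $n$ is a perfect square is trivial ($x=\sqrt n$, $y=z=0$), so I may assume $4n$ is a nonsquare positive discriminant. Then I would invoke Duke's theorem, in the point-counting form deduced from Einsiedler--Lindenstrauss--Michel--Venkatesh. It says that as $D\to\infty$, the number of primitive forms of discriminant $D$ with $(a,b,c)/\sqrt D$ in $\Om$ is asymptotic to a positive constant times $\vol(\Om)$ times the total mass (class number times regulator) of discriminant $D$. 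In particular it is positive for $D=4n$ with $n$ large. I would need the version covering non-fundamental discriminants, since $4n$ is never fundamental when $n\equiv 1 \pmod 4$ and similar cases arise; I would quote it in that generality.

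It remains to handle the parity condition $a\equiv c\pmod 2$; this is the "finite parity correction". Since the form is primitive and $b$ is even, $a$ and $c$ are not both even. The bad cases are therefore $(a,c)$ of parities (even, odd) or (odd, even). My first attempt is to choose the patch $\Om$ together with a fixed finite set of matrices $\gamma\in\mathrm{SL}_2(\Z)$ that permute the residues of $(a,c)\bmod 2$ (for instance $\begin{psmallmatrix}1&1\\0&1\end{psmallmatrix}$ sends $c\mapsto a+b+c$, which has the parity of $a+c$). I would arrange $\Om$ small enough that some translate $\gamma\Om$ still lies inside the strict region. Then I would apply Duke's count to the patch $\Om'=\bigcap_\gamma \gamma^{-1}(\text{good region})$.

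The cleaner alternative is to run equidistribution for the congruence subgroup $\Gamma_0(2)$ or $\Gamma(2)$, which ELMV's method covers. It gives equidistribution of forms within each parity class $(a,c)\bmod 2$, so the class with $a\equiv c\equiv 1$ receives a positive proportion of points in $\Om$. I expect this parity step, together with confirming that the quoted Duke/ELMV statement applies uniformly to all discriminants $4n$ and to the congruence refinement, to be the main obstacle. The rest of the argument is the elementary bijection and an explicit choice of $\Om$.
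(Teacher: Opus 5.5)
Your route is the paper's route: the same linear dictionary (with $x$ and $y$ swapped), the same box, Duke/ELMV point counting on a fixed patch at $d=4n$, and a parity correction by finitely many $\mathrm{SL}_2(\Z)$ moves. The paper cites ELMV Theorem~2.3, which covers all positive nonsquare discriminants, so your worry about non-fundamental $4n$ is handled.

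The gap is that the parity step, which you yourself call the main obstacle, is never carried out. What it needs is not that $\Om$ be small: shrinking $\Om$ does nothing to move $\gamma\Om$ into the box. It needs a base point $P$ such that $P$ \emph{and} $\gamma P$ lie strictly inside the box, for a move $\gamma$ repairing each of the two bad classes (odd, even) and (even, odd). Only then is $\Om'=\bigcap_\gamma\gamma^{-1}(\text{box})$ a nonempty open neighbourhood of $P$. You never specify the moves or check that this intersection is nonempty. The only move you name, $[a,b,c]\mapsto[a,2a+b,a+b+c]$, repairs only (odd, even). The choice of the second move genuinely matters: at your point $P=(-\tfrac{\sqrt3}{2},1,\tfrac{\sqrt3}{2})$, the natural candidate $[a,b,c]\mapsto[a+b+c,\,b+2c,\,c]$ gives $(1,1+\sqrt3,\tfrac{\sqrt3}{2})$, which violates $|b|<2$.

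The missing verification is short. Use the following two moves:
\[
T[a,b,c]=[a,\,b+2a,\,a+b+c] \quad\text{for (odd, even)},\qquad U[a,b,c]=[c,\,2c-b,\,a-b+c] \quad\text{for (even, odd)}.
\]
Here $U$ is $[a,b,c]\mapsto[c,-b,a]$ followed by $T$. Since $b$ is even, each produces first and third coefficients that are both odd. Their images of your point are $T(P)=(-\tfrac{\sqrt3}{2},1-\sqrt3,1)$ and $U(P)=(\tfrac{\sqrt3}{2},\sqrt3-1,-1)$. Each has $|b|=\sqrt3-1$, $|a+c|=1-\tfrac{\sqrt3}{2}$ and $|c-a|=1+\tfrac{\sqrt3}{2}$, all strictly inside your region. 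So your point does work, and with $\{I,T,U\}$ the argument closes exactly as in the paper, which uses $P_0=(\tfrac25,-\tfrac25,-\tfrac{21}{40})$ on $B^2-4AC=1$.

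Your ``cleaner alternative'' is not cleaner. It needs equidistribution of the fixed-parity sub-packets on a $\Gamma_0(2)$ or $\Gamma(2)$ quotient. That does not follow formally from Duke/ELMV on $\PGL_2(\Z)\backslash\PGL_2(\R)$, because such a sub-packet is only a proper, non-deck-invariant piece of the preimage of the packet. It would need its own justification, and the elementary move trick makes it unnecessary.
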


The key point is that the bounded representation problem for $Q$ is equivalent, after a
linear change of variables, to a point-picking problem for primitive binary quadratic forms
of discriminant $4n$. Duke's theorem gives equidistribution of such forms on the one-sheeted
hyperboloid attached to the discriminant form. A short parity argument then recovers the
integral triple $(x,y,z)$.

For a positive integer $d$ we write
\[
  R^*_{\disc}(d):=\{(a,b,c)\in\Z^3: b^2-4ac=d,\ \gcd(a,b,c)=1\}.
\]
This is exactly the set denoted $R_{\disc}(d)$ in \cite{ELMV}; the superscript $*$ is only to
emphasize primitivity. We also set
\[
  \Vdisc:=\{(A,B,C)\in\R^3: B^2-4AC=1\}.
\]

\section{From ternary representations to discriminant points}

Write $[a,b,c]$ for the binary quadratic form
\[
  q_{a,b,c}(u,v)=au^2+buv+cv^2.
\]
The following linear dictionary is immediate.

\begin{lemma}\label{lem:dictionary}
Let $n\ge 1$ and let $(a,b,c)\in\Z^3$ satisfy
\[
  b^2-4ac=4n,
  \qquad
  b\equiv 0 \pmod 2,
  \qquad
  a\equiv c \pmod 2.
\]
Then
\[
  x:=\frac{a-c}{2},
  \qquad
  y:=\frac{b}{2},
  \qquad
  z:=\frac{a+c}{2}
\]
are integers and satisfy
\[
  x^2+y^2-z^2=n.
\]
Conversely, every integer solution of $x^2+y^2-z^2=n$ arises in this way from
\[
  a=x+z,
  \qquad b=2y,
  \qquad c=z-x.
\]
\end{lemma}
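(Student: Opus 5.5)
The proof is a direct verification in both directions, with integrality as the only point needing a remark.

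For the forward direction, first check that $x,y,z$ are integers. Since $b\equiv 0\pmod 2$, $y=b/2\in\Z$. Since $a\equiv c\pmod 2$, both $a-c$ and $a+c$ are even, so $x$ and $z$ are integers. Next compute
\[
  x^2-z^2=\frac{(a-c)^2-(a+c)^2}{4}=\frac{-4ac}{4}=-ac,
\]
so that
\[
  x^2+y^2-z^2=\frac{b^2}{4}-ac=\frac{b^2-4ac}{4}=\frac{4n}{4}=n .
\]

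For the converse, start from integers $x,y,z$ with $x^2+y^2-z^2=n$ and set $a=x+z$, $b=2y$, $c=z-x$. These are integers, and $b$ is even. Also $a-c=2x$, so $a\equiv c\pmod 2$. The discriminant is
\[
  b^2-4ac=4y^2-4(z+x)(z-x)=4(x^2+y^2-z^2)=4n .
\]
Hence $(a,b,c)$ satisfies all three hypotheses of the lemma.

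It remains to see that applying the forward map to this triple returns $(x,y,z)$. One checks
\[
  \frac{a-c}{2}=x,\qquad \frac{b}{2}=y,\qquad \frac{a+c}{2}=z .
\]
So the two linear maps are mutually inverse bijections between integer solutions of $x^2+y^2-z^2=n$ and triples $(a,b,c)$ satisfying the stated conditions. This is exactly the claim that every solution arises in this way.

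There is no real obstacle. The only point needing care is to note explicitly that the parity hypotheses are exactly what makes the halving integral. The lemma makes no primitivity assertion, so $\gcd(a,b,c)=1$ need not be addressed here.
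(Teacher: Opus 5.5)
Your proposal is correct and follows the paper's proof: both are the direct computation $x^2+y^2-z^2=\frac{(a-c)^2+b^2-(a+c)^2}{4}=\frac{b^2-4ac}{4}=n$, with the converse obtained by running the identity backwards. You additionally spell out the integrality of the halving and the check that the two linear maps are mutually inverse, both of which the paper leaves implicit.
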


\begin{proof}
A direct computation gives
\[
  x^2+y^2-z^2
  =\frac{(a-c)^2+b^2-(a+c)^2}{4}
  =\frac{b^2-4ac}{4}
  =n.
\]
The converse is the same identity in reverse.
\end{proof}

Thus the problem is to find a discriminant point with the stated parity and lying in a
suitable box. Define
\begin{equation}\label{eq:Kdef}
  \Kbox:=\Bigl\{(A,B,C)\in\Vdisc:
  |A-C|<1,\ |B|<1,\ |A+C|<1\Bigr\}.
\end{equation}
This set is open and relatively compact in $\Vdisc$: indeed the inequalities on $A-C$, $B$ and
$A+C$ bound $A$, $B$ and $C$ individually.

If $(a,b,c)$ satisfies the congruences in Lemma~\ref{lem:dictionary} and
\[
  \left(\frac{a}{2\sqrt n},\frac{b}{2\sqrt n},\frac{c}{2\sqrt n}\right)\in \Kbox,
\]
then the corresponding $(x,y,z)$ obeys
\[
  \left|\frac{x}{\sqrt n}\right|=
  \left|\frac{a-c}{2\sqrt n}\right|<1,
  \qquad
  \left|\frac{y}{\sqrt n}\right|=
  \left|\frac{b}{2\sqrt n}\right|<1,
  \qquad
  \left|\frac{z}{\sqrt n}\right|=
  \left|\frac{a+c}{2\sqrt n}\right|<1.
\]
Hence $|x|,|y|,|z|<\sqrt n$, so
\[
  \max(x^2,y^2,z^2)<n.
\]
For non-square $n$ this is stronger than the desired bound $\le n$.

\section{Three elementary coefficient moves}

Define operators on binary quadratic forms by
\[
  (Tq)(u,v):=q(u+v,v),
  \qquad
  (Sq)(u,v):=q(-v,u).
\]
Then
\begin{equation}\label{eq:TSformulas}
  T[a,b,c]=[a,b+2a,a+b+c],
  \qquad
  S[a,b,c]=[c,-b,a].
\end{equation}
We also write
\[
  U:=T\circ S,
\]
so that
\begin{equation}\label{eq:Uformula}
  U[a,b,c]=[c,-b+2c,a-b+c].
\end{equation}
Each of $T,S,U$ preserves integrality and discriminant.

The only congruence obstruction needed for Lemma~\ref{lem:dictionary} is disposed of by a
finite case split.

\begin{lemma}\label{lem:parity}
Suppose $(a,b,c)\in\Z^3$ satisfies
\[
  b^2-4ac\equiv 0 \pmod 4.
\]
Then $b$ is even, and at least one of the three forms
\[
  [a,b,c],\qquad T[a,b,c],\qquad U[a,b,c]
\]
has first and third coefficients of the same parity.
\end{lemma}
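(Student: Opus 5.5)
The plan is a direct parity case analysis, using only the explicit formulas \eqref{eq:TSformulas} and \eqref{eq:Uformula}. The first claim is immediate: from $b^2-4ac\equiv 0\pmod 4$ we get $b^2\equiv 0\pmod 4$, so $b$ is even. Hence $b\equiv 0\pmod 2$ throughout, and only the parities of $a$ and $c$ matter.

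Next I will reduce all three forms modulo $2$. Since $b$ is even,
\[
  T[a,b,c]=[a,\,b+2a,\,a+b+c]\equiv[a,\,0,\,a+c]\pmod 2,
  \qquad
  U[a,b,c]=[c,\,-b+2c,\,a-b+c]\equiv[c,\,0,\,a+c]\pmod 2.
\]
So the first-minus-third differences are, modulo $2$,
\[
  a-c,\qquad a-(a+c)\equiv c,\qquad c-(a+c)\equiv a,
\]
for $[a,b,c]$, $T[a,b,c]$ and $U[a,b,c]$ respectively.

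The three residues $a-c$, $c$, $a$ cannot all be odd. If $a$ and $c$ are both odd, then $a-c$ is even. If they are not both odd, then $a$ or $c$ is even. Concretely, the cases split as follows:
\begin{itemize}
\item[] $a\equiv c$: the form $[a,b,c]$ itself works;
\item[] $a$ odd, $c$ even: $T[a,b,c]$ works, since its coefficients are $\equiv[1,0,1]$;
\item[] $a$ even, $c$ odd: $U[a,b,c]$ works, since its coefficients are $\equiv[1,0,1]$.
\end{itemize}

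I would also record that $T$ and $U$ keep the middle coefficient even, because they preserve the discriminant. There is no real obstacle here; the only care needed is to keep the sign conventions of \eqref{eq:Uformula} straight when reducing modulo $2$.
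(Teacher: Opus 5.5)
Your proposal is correct and follows essentially the same route as the paper. You deduce that $b$ is even from $b^2\equiv 0 \pmod 4$, then split on the parities of $a$ and $c$, taking $[a,b,c]$ when $a\equiv c$, $T[a,b,c]$ when $a$ is odd and $c$ is even, and $U[a,b,c]$ when $a$ is even and $c$ is odd. Your mod-$2$ reduction of the whole forms is just a more compact way of doing the paper's check on the third coefficients $a+b+c$ and $a-b+c$.
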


\begin{proof}
From $b^2\equiv 4ac \pmod 4$ we get $b^2\equiv 0 \pmod 4$, hence $b$ is even.
If $a\equiv c \pmod 2$, then the original form already has first and third coefficients of the same
parity.

Assume now that $a\not\equiv c \pmod 2$, so $a$ and $c$ have opposite parity.
If $c$ is even, then $a$ is odd and the third coefficient of $T[a,b,c]$ satisfies
\[
  a+b+c\equiv a+c\equiv 1 \pmod 2,
\]
so $T[a,b,c]$ has first and third coefficients of the same parity.
If instead $a$ is even, then $c$ is odd and the third coefficient of $U[a,b,c]$ satisfies
\[
  a-b+c\equiv a+c\equiv 1 \pmod 2,
\]
so $U[a,b,c]$ has first and third coefficients of the same parity.
\end{proof}

We now choose a small patch of $\Vdisc$ that stays inside $\Kbox$ under the relevant moves.
Set
\[
  P_0:=\left(\frac25,-\frac25,-\frac{21}{40}\right)\in\Vdisc.
\]
Indeed,
\[
  \left(-\frac25\right)^2-4\cdot\frac25\cdot\left(-\frac{21}{40}\right)=1.
\]
Using \eqref{eq:TSformulas} and \eqref{eq:Uformula},
\[
  T(P_0)=\left(\frac25,\frac25,-\frac{21}{40}\right),
  \qquad
  U(P_0)=\left(-\frac{21}{40},-\frac{13}{20},\frac{11}{40}\right).
\]
A direct check shows that all three points belong to $\Kbox$:
for $P_0$ and $T(P_0)$ one has
\[
  |A-C|=\frac{37}{40},\qquad |B|=\frac25,\qquad |A+C|=\frac18,
\]
while for $U(P_0)$ one has
\[
  |A-C|=\frac45,\qquad |B|=\frac{13}{20},\qquad |A+C|=\frac14.
\]
Since $\Kbox$ is open and the maps $T,U:\Vdisc\to\Vdisc$ are continuous, there exists a nonempty
open relatively compact neighborhood
\begin{equation}\label{eq:OmegaChoice}
  P_0\in\Om\Subset \Vdisc
\end{equation}
such that
\begin{equation}\label{eq:OmegaImages}
  \overline{\Om}\subset \Kbox,
  \qquad
  T(\overline{\Om})\subset \Kbox,
  \qquad
  U(\overline{\Om})\subset \Kbox.
\end{equation}

\section{Duke's theorem in the needed point-counting form}

Let
\[
  G=\PGL_2(\R),
  \qquad
  \Gamma=\PGL_2(\Z),
\]
and let $A\subset G$ be the diagonal torus. Following \cite[\S2.4]{ELMV}, the homogeneous space
$G/A$ is identified with $\Vdisc$; under this identification the base point is
$q_0=(0,1,0)\in\Vdisc$, corresponding to the form $uv$.
The $G$-invariant Radon measure on $G/A$ transports to a positive $G$-invariant Radon measure
on $\Vdisc$, which we denote by $\mu_{\disc,+1}$. The measure $\mu_{\disc,+1}$ is the pushforward of the
$G$-invariant quotient measure on $G/A$
under the identification described in \cite[\S2.4]{ELMV}.
This agrees with the cone measure $\mu_{\disc,+1}$
used in \cite{ELMV} up to a positive scalar normalization,
which is irrelevant for the equidistribution statements used below.

For each $x=(a,b,c)\in R^*_{\disc}(d)$ choose $g_x\in G$ with
\[
  g_x\cdot q_0=d^{-1/2}x.
\]
ELMV define a discrete measure on $G/A$ by
\[
  \lambda_d:=\sum_{x\in R^*_{\disc}(d)}\delta_{g_xA/A}
\]
(see \cite[Eq.~(2.8) and the paragraph following it]{ELMV}). Via the measure-theoretic duality of
\cite[Eq.~(2.8)]{ELMV}, this corresponds to a measure $\nu_d$ on $\Gamma\backslash G$ supported on the
union $G_d$ of closed $A$-orbits attached to discriminant-$d$ forms. ELMV prove that
\[
  \mu_d:=\frac{1}{\vol(G_d)}\nu_d
\]
converges weak-* to Haar measure on $\Gamma\backslash G$ as $d\to\infty$ through positive non-square
discriminants; this is their Theorem~2.3.

For positive discriminant $d>0$ the set $R^*_{\disc}(d)$ of primitive
binary quadratic forms of discriminant $d$ is infinite.
However, if $\phi$ is compactly supported on $V_{\disc,+1}(\R)$,
then only finitely many points of the scaled set
\[
\frac{1}{\sqrt d}R^*_{\disc}(d)
\]
lie in $\operatorname{supp}(\phi)$.
Indeed, $\operatorname{supp}(\phi)$ is contained in a compact region of
the one-sheeted hyperboloid $V_{\disc,+1}(\R)$,
and the lattice $V_{\disc}(\Z)$ intersects any compact subset of $V_{\disc}(\R)$
in finitely many points.
Consequently the sum defining $\Lambda_d(\phi)$ below is finite.

The precise consequence we need is the following. 

\begin{proposition}[Point-counting consequence of Duke--ELMV]\label{prop:pointcount}
Let $\phi\in \Cc(\Vdisc)$. For every positive discriminant $d$ define
\[
  \Lambda_d(\phi):=\sum_{(a,b,c)\in R^*_{\disc}(d)}
  \phi\!\left(\frac{a}{\sqrt d},\frac{b}{\sqrt d},\frac{c}{\sqrt d}\right).
\]
Then, as $d\to\infty$ through positive non-square discriminants,
\[
  \Lambda_d(\phi)=\vol(G_d)\bigl(\mu_{\disc,+1}(\phi)+o_{\phi}(1)\bigr).
\]
In particular, if $\phi\ge 0$ and $\phi\not\equiv 0$, then $\Lambda_d(\phi)>0$ for all sufficiently
large positive non-square discriminants $d$.
\end{proposition}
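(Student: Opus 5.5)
The plan is to derive the point count from ELMV's Theorem~2.3 through the duality between $G/A$ and $\Gamma\backslash G$ described in \cite[\S2.4]{ELMV}.

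\emph{Step 1: rewrite $\Lambda_d(\phi)$ on $G/A$.} Using the identification $G/A\cong\Vdisc$, $gA\mapsto g\cdot q_0$, regard $\phi$ as a function $\tilde\phi\in\Cc(G/A)$. By the choice of $g_x$,
\[
  \Lambda_d(\phi)=\sum_{x\in R^*_{\disc}(d)}\tilde\phi(g_xA)=\lambda_d(\tilde\phi).
\]
The action of $\Gamma=\PGL_2(\Z)$ on forms preserves $R^*_{\disc}(d)$, so $\lambda_d$ is left $\Gamma$-invariant. Decompose $R^*_{\disc}(d)$ into $\Gamma$-orbits. There are finitely many, by finiteness of the class number. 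Fix a representative $x_i$ of each orbit, with stabilizer $\Gamma_{x_i}$. By Dirichlet's unit theorem this stabilizer is a lattice in the conjugate torus $g_{x_i}Ag_{x_i}^{-1}$, and $\Gamma\backslash\Gamma g_{x_i}A$ is a closed orbit.

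\emph{Step 2: unfold.} Given $\tilde\phi\in\Cc(G/A)$, choose $f\in\Cc(G)$ with $\int_A f(ga)\,da=\tilde\phi(gA)$. Set $F(\Gamma g)=\sum_{\gamma\in\Gamma}f(\gamma g)$, so that $F\in\Cc(\Gamma\backslash G)$. The standard unfolding computation then gives
\[
  \lambda_d(\tilde\phi)=\sum_i\ \int_{\Gamma_{x_i}\backslash g_{x_i}A}F=\nu_d(F),
\]
where each inner integral is over a closed $A$-orbit with its natural $A$-Haar measure. Here one must take care with the finite part of the stabilizer: in $\PGL_2$ the image of $\pm1$ is trivial, but possible orientation-reversing elements must be accounted for by a bounded multiplicity factor. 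This is exactly the content of \cite[Eq.~(2.8)]{ELMV}, and I will cite it rather than rederive it, after matching the normalizations.

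Symmetrically, the $G$-invariant measure on $G/A$ satisfies $\mu_{\disc,+1}(\tilde\phi)=\int_{\Gamma\backslash G}F\,dm$. Here $m$ is Haar measure on $\Gamma\backslash G$, normalized compatibly with the constant by which $\mu_{\disc,+1}$ was fixed.

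\emph{Step 3: apply equidistribution.} By ELMV Theorem~2.3, $\nu_d(F)/\vol(G_d)\to m(F)=\mu_{\disc,+1}(\phi)$ as $d\to\infty$ through non-square discriminants. This yields the asymptotic, with the error term depending on $F$ and hence on $\phi$.

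For positivity, suppose $\phi\ge0$ and $\phi\not\equiv0$. Then $\mu_{\disc,+1}(\phi)>0$, since the measure is $G$-invariant and Radon on a homogeneous space, hence of full support. Also $\vol(G_d)>0$. Together these give $\Lambda_d(\phi)>0$ for all large $d$.

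\emph{Main obstacle.} The delicate step is the bookkeeping in Step~2. One has to verify that the sum over all of $R^*_{\disc}(d)$, as opposed to a sum over class representatives, unfolds to the integral over $G_d$ with the right multiplicities. This requires tracking the $\PGL_2$ versus $\mathrm{PSL}_2$ stabilizers, and the sign issues caused by $\phi$ living on $\Vdisc$ rather than on forms up to $\pm$. I would handle it by deferring to ELMV's Eq.~(2.8) verbatim and checking that any discrepancy is a bounded positive constant. Such a constant can be absorbed into the normalization of $\mu_{\disc,+1}$, which was only fixed up to a positive scalar, so it does not affect the positivity conclusion that the application needs.
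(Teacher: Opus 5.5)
Your proposal is correct and follows essentially the same route as the paper. Both proofs identify $\Lambda_d(\phi)$ with $\lambda_d(\widetilde\phi)$, lift $\widetilde\phi$ to some $F\in\Cc(G)$ with $F_A=\widetilde\phi$, pass to $\nu_d(F_\Gamma)$ via the duality of ELMV Eq.~(2.8), apply ELMV Theorem~2.3, transport Haar measure back to $\mu_{\disc,+1}$, and obtain positivity from full support; the paper likewise just cites (2.8) for the unfolding, taking $\nu_d$ to be the measure dual to $\lambda_d$ so the identity is exact, so your orbit decomposition and multiplicity bookkeeping add detail but are not needed beyond that citation.
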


\begin{proof}
Let $\widetilde\phi\in \Cc(G/A)$ correspond to $\phi$ under the identification $G/A\simeq\Vdisc$.
ELMV note just after Theorem~2.3 that every function in $\Cc(G/A)$ is of the form $F_A$ for some
$F\in \Cc(G)$, where
\[
  F_A(gA):=\int_A F(gh)\,d\mu_A(h).
\]
Choose such an $F$ with $F_A=\widetilde\phi$.

By construction of $\lambda_d$ and the choice of $g_x$,
\[
  \Lambda_d(\phi)=\lambda_d(\widetilde\phi).
\]
Now let
\[
  F_\Gamma(\Gamma g):=\sum_{\gamma\in\Gamma} F(\gamma g),
\]
which is compactly supported on $\Gamma\backslash G$. The measure correspondence in
\cite[Eq.~(2.8)]{ELMV} gives
\[
  \lambda_d(\widetilde\phi)=\nu_d(F_\Gamma).
\]
Since $\mu_d=\nu_d/\vol(G_d)$ and $\mu_d\to\mu_{\Gamma\backslash G}$ weak-*, ELMV Theorem~2.3 yields
\[
  \nu_d(F_\Gamma)=\vol(G_d)\bigl(\mu_{\Gamma\backslash G}(F_\Gamma)+o_{\phi}(1)\bigr).
\]
Transporting $\mu_{\Gamma\backslash G}$ back through the same measure correspondence and then through
$G/A\simeq \Vdisc$ gives
\[
  \mu_{\Gamma\backslash G}(F_\Gamma)=\mu_{\disc,+1}(\phi).
\]
Combining the last three displays proves the asymptotic formula.

For the final claim, note that $\mu_{\disc,+1}$ is a positive Radon measure with full support on
$\Vdisc$, because it is induced from Haar measure on the homogeneous space $G/A$. Hence
$\mu_{\disc,+1}(\phi)>0$ whenever $\phi\ge 0$ and $\phi\not\equiv 0$. The asymptotic formula then
forces $\Lambda_d(\phi)>0$ for all sufficiently large positive non-square $d$.
\end{proof}

\begin{corollary}\label{cor:openhit}
Let $\mathcal U\subset \Vdisc$ be a nonempty open relatively compact set. Then, for all sufficiently
large positive non-square discriminants $d$,
\[
  d^{-1/2}R^*_{\disc}(d)\cap \mathcal U\neq \varnothing.
\]
\end{corollary}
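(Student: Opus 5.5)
The plan is to deduce the corollary directly from Proposition~\ref{prop:pointcount} by testing against a bump function supported inside $\mathcal U$. Since $\mathcal U$ is nonempty and open in $\Vdisc$, I pick a point $p\in\mathcal U$. Because $\Vdisc$ is a smooth surface, hence a locally compact Hausdorff space, Urysohn's lemma gives $\phi\in\Cc(\Vdisc)$ with $0\le\phi\le1$, $\phi(p)=1$, and $\supp(\phi)$ a compact subset of $\mathcal U$. Concretely, I can take a small closed coordinate disc around $p$ contained in $\mathcal U$ and a continuous radial bump on it. Relative compactness of $\mathcal U$ is not actually needed at this step; it only guarantees that such compact supports are available inside a bounded region.

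Next I apply the final clause of Proposition~\ref{prop:pointcount} to this $\phi$. Since $\phi\ge0$ and $\phi\not\equiv0$, we get $\Lambda_d(\phi)>0$ for all sufficiently large positive non-square discriminants $d$, with the threshold depending only on $\phi$ and hence only on $\mathcal U$. The sum $\Lambda_d(\phi)$ is finite, by the remark preceding the proposition, and all its terms are nonnegative. So positivity forces at least one term to be nonzero: there is $(a,b,c)\in R^*_{\disc}(d)$ with $\phi(d^{-1/2}(a,b,c))>0$. This gives $d^{-1/2}(a,b,c)\in\supp(\phi)\subset\mathcal U$, which is the claim.

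There is no real obstacle here; the only points to check are that a nonzero nonnegative compactly supported function exists inside an arbitrary nonempty open subset of $\Vdisc$, and that the positivity conclusion of the proposition is uniform in the sense of holding for all large $d$. All the substantive input, namely the ELMV equidistribution and the full support of $\mu_{\disc,+1}$, is already packaged in Proposition~\ref{prop:pointcount}.
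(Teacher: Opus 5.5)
Your proposal is correct and is the paper's argument: choose $\phi\in\Cc(\Vdisc)$ with $\phi\ge 0$, $\phi\not\equiv 0$ and $\supp(\phi)\subset\mathcal U$, then use the positivity clause of Proposition~\ref{prop:pointcount}. Your extra details (the Urysohn bump function, and the fact that $\Lambda_d(\phi)$ is a finite sum of nonnegative terms, so a positive term yields a point of $\supp(\phi)\subset\mathcal U$) only spell out what the paper leaves implicit.
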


\begin{proof}
Choose $\phi\in\Cc(\Vdisc)$ with $\phi\ge 0$, $\phi\not\equiv 0$, and $\supp(\phi)\subset \mathcal U$.
Then Proposition~\ref{prop:pointcount} gives $\Lambda_d(\phi)>0$ for all large positive non-square
$d$, which means exactly that the scaled set $d^{-1/2}R^*_{\disc}(d)$ meets $\mathcal U$.
\end{proof}

\section{Proof of the main theorem}

\begin{proof}[Proof of Theorem~\ref{thm:main}]
If $n=m^2$ is a square, then
\[
  n=m^2+0^2-0^2,
\]
so the desired bound is immediate. It remains to treat non-square $n$.

Let $n$ be sufficiently large and non-square, and set $d=4n$. Then $d$ is a positive non-square
discriminant. By Corollary~\ref{cor:openhit} applied to the open set $\Om$ from
\eqref{eq:OmegaChoice}--\eqref{eq:OmegaImages}, there exists a primitive triple
$(a,b,c)\in R^*_{\disc}(4n)$ such that
\[
  \left(\frac{a}{2\sqrt n},\frac{b}{2\sqrt n},\frac{c}{2\sqrt n}\right)\in \Om.
\]

Because $4n\equiv 0 \pmod 4$, Lemma~\ref{lem:parity} shows that at least one of the three forms
\[
  [a,b,c],\qquad T[a,b,c],\qquad U[a,b,c]
\]
has first and third coefficients of the same parity; in each case the middle coefficient is even.
Let $[a',b',c']$ denote such a form. Since $T$ and $U$ preserve discriminant, we still have
\[
  b'^2-4a'c'=4n,
  \qquad
  b'\equiv 0 \pmod 2,
  \qquad
  a'\equiv c' \pmod 2.
\]
Moreover, by \eqref{eq:OmegaImages} the scaled point
\[
  \left(\frac{a'}{2\sqrt n},\frac{b'}{2\sqrt n},\frac{c'}{2\sqrt n}\right)
\]
lies in $\Kbox$.

Lemma~\ref{lem:dictionary} therefore gives integers
\[
  x:=\frac{a'-c'}{2},
  \qquad
  y:=\frac{b'}{2},
  \qquad
  z:=\frac{a'+c'}{2}
\]
with
\[
  x^2+y^2-z^2=n.
\]
Since the scaled point lies in $\Kbox$, Section~2 shows that
\[
  |x|,|y|,|z|<\sqrt n,
\]
whence
\[
  \max(x^2,y^2,z^2)<n.
\]
This proves the theorem for all sufficiently large non-square $n$, and hence for all sufficiently
large integers $n$.
\end{proof}

\begin{remark}
The proof is qualitative: it supplies a threshold $N$ but does not make it explicit.
\end{remark}

\end{document}